\newtheorem*{prop*}{Proposition}
\newtheorem*{thm*}{Theorem}
\newtheorem*{lem*}{Lemma}
\newcommand{\R}{{\mathbb R}}
\title[On the connectivity of the hyperbolicity region]{On the connectivity of the hyperbolicity region of irreducible polynomials}
\author{Mario Kummer}
\address{Max-Planck-Institute for Mathematics in the Sciences, Leipzig, Germany} 
\email{kummer@mis.mpg.de}
\begin{document}

\begin{abstract}
 We give an elementary proof for the fact that an irreducible hyperbolic polynomial has only one pair of hyperbolicity cones.
\end{abstract}
\maketitle

\section{Introduction}
A homogeneous polynomial $h \in \R[x_1,\ldots,x_n]$ is \textit{hyperbolic with respect to} $e \in \R^{n+1}$ if the univariate polynomial $h(te+v)$ has only real roots for all $v \in \R^{n}$ and if $h(e) \neq 0$. The notion of hyperbolicity is transfered to the projective zero locus of $h$ in the obvious way. Hyperbolic polynomials were first studied in the context of partial differential equations but later they appeared also in many other areas like combinatorics and optimization. For a given homogeneous polynomial $h$ one can consider the set $\Lambda_h$ of all points with respect to which $h$ is hyperbolic. Lars G{\aa}rding \cite{gar} showed that $\Lambda_h$ is the union of several connected components of $\{a\in\R^n:\,h(a)\neq0\}$ all of which are convex cones. Interest in these convex cones comes, among others, from convex optimization since these are the feasible sets of \textit{hyperbolic programs} \cite{Gu97}. It follows directly from the definitions that we have $-\Lambda_h=\Lambda_h$. In particular, connected components of $\Lambda_h$ come in pairs. Under the assumption that $\nabla h(a)\neq0$ for all $0\neq a\in\R^n$ there is at most one such pair by \cite[Thm. 5.2]{HV07}. In general, $\Lambda_h$ can have many connected components. For example a product of linear forms is hyperbolic with respect to any point where it is not zero. Another example is the quartic polynomial $p=(x_1^2+x_2^2-2\cdot x_3^2)\cdot(2\cdot x_1^2-x_2^2-x_3^2)$  which has no linear factor. One easily checks that $\Lambda_p$ has four connected components. It seems to be consensus among experts that if $h$ is irreducible, then the set $\Lambda_h$ should have at most one pair of connected components.
However, we are not aware of any proof for that statement in the literature. This note is intended to give a reference (with proof) for that statement. We were originally motivated by the questions considered in \cite{jorgens2017}.

The above example of the quartic polynomial $p$ without a linear factor suggests that there might be no easy proof for this statement only relying on convex geometric arguments since all connected components of $\Lambda_p$ are strictly convex. Our proof will rely on elementary properties of real algebraic curves and the theorem of Bertini.

\section{Real projective curves}
Let $X$ be a smooth projective and geometrically irreducible curve defined over the reals. It is a well-known fact going back to Felix Klein \cite[\S 21]{type12} that the set $X(\mathbb{C})\setminus X(\mathbb{R})$ has either one or two connected components. If it has two connected components, say $X_+$ and $X_-$, then $X$ is called a \textit{separating curve}. In that case, an orientation on $X_+$ induces an orientation on $X(\mathbb{R})$. Up to global reversing, this orientation does not depend on the choice of the component and the orientation on it. It is called the \textit{complex orientation} on $X(\mathbb{R})$ and was introduced by V. A. Rokhlin \cite[\S 2.1]{rokhlin} in order to study the topology of real plane curves.

Let $f: X\to\mathbb{P}^1$ be a surjective morphism. We fix one of the two orientations on $\mathbb{P}^1(\mathbb{R})$. On every point $p\in X(\mathbb{R})$ where $f$ is unramified the fixed orientation on $\mathbb{P}^1(\mathbb{R})$ induces an orientation of $X(\mathbb{R})$ locally around $p$. Again, up to global reversing, this orientation does not depend on the choice of the orientation on $\mathbb{P}^1(\mathbb{R})$. We call it the \textit{orientation induced by $f$} on $X(\mathbb{R})$. 

Now assume that $f$ has the property $f^{-1}(\mathbb{P}^1(\mathbb{R}))= X(\mathbb{R})$. Then, since $\mathbb{P}^1(\mathbb{C})\setminus\mathbb{P}^1(\mathbb{R})$ is not connected, the set $X(\mathbb{C})\setminus X(\mathbb{R})$ is also not connected. It consists therefore of two connected components $X_+$ and $X_-$ which are the preimages under $f$ of the upper and lower half-planes respectively, cf. \cite[\S 3.6]{rokhlin}. We call $f$ a \textit{separating morphism} because it is a certificate for $X$ being separating. Note that in that case $f$ is unramified on all of $X(\mathbb{R})$ (see e.g. \cite[Rem. 3.2]{gaba}) and the orientation induced by $f$ is the same as the complex orientation.

\section{Plane hyperbolic curves}
 Now let $Y$ be a possibly singular projective and geometrically irreducible curve defined over the reals. As above we call a morphism $f: Y\to\mathbb{P}^1$ \textit{separating} if $f^{-1}(\mathbb{P}^1(\mathbb{R}))= Y(\mathbb{R})$. Again this induces an orientation on the smooth points of $Y(\mathbb{R})$. On the other hand, the desingularization map $g:\tilde{Y}\to Y$ together with the complex orientation on $\tilde{Y}(\mathbb{R})$ also induces an orientation on the smooth points of $Y(\mathbb{R})$. Up to global reversing, these two orientations are the same. Indeed, $g$ is an isomorphism on the smooth points of $Y$ and the morphism $f\circ g$ is also separating. This gives us the following lemma.
 \begin{lem*}
 Two separating morphisms $f_1, f_2$ from $Y$ induce the same orientations on the smooth points of $Y(\mathbb{R})$ --- up to global reversing.
 \end{lem*}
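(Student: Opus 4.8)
The plan is to reduce the statement to the smooth model of $Y$. Let $g\colon\tilde Y\to Y$ be the desingularization and recall that it is an isomorphism over the smooth locus $Y_{\mathrm{sm}}$ of $Y$; in particular it restricts to an isomorphism of real varieties $g^{-1}(Y_{\mathrm{sm}})\cong Y_{\mathrm{sm}}$, and the easy case of the fibre analysis below shows $g^{-1}(Y_{\mathrm{sm}}(\mathbb{R}))=g^{-1}(Y_{\mathrm{sm}})(\mathbb{R})$, an open subset of $\tilde Y(\mathbb{R})$. The first thing I would check is that if $f\colon Y\to\mathbb{P}^1$ is separating, then so is $f\circ g\colon\tilde Y\to\mathbb{P}^1$. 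Since $(f\circ g)^{-1}(\mathbb{P}^1(\mathbb{R}))=g^{-1}\big(f^{-1}(\mathbb{P}^1(\mathbb{R}))\big)=g^{-1}(Y(\mathbb{R}))$ and $\tilde Y(\mathbb{R})\subseteq g^{-1}(Y(\mathbb{R}))$ always holds, this comes down to showing $g^{-1}(Y(\mathbb{R}))\subseteq\tilde Y(\mathbb{R})$, i.e.\ that the fibre of $g$ over a real point of $Y$ contains no non-real point.

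To see this, suppose $\tilde y\in\tilde Y(\mathbb{C})\setminus\tilde Y(\mathbb{R})$ with $y:=g(\tilde y)\in Y(\mathbb{R})$. Then $\overline{\tilde y}\neq\tilde y$ also lies over $y$, so it parametrises a branch of $Y$ at $y$ distinct from the one parametrised near $\tilde y$; these two branches are complex conjugates of each other and, being distinct analytic branches at a point, meet only in $y$ near $y$. Hence $g$ maps some punctured neighbourhood of $\tilde y$ into $Y(\mathbb{C})\setminus Y(\mathbb{R})$. On the other hand $f\circ g$ is a non-constant holomorphic map near $\tilde y$ with $(f\circ g)(\tilde y)\in\mathbb{P}^1(\mathbb{R})$, so the locus where $f\circ g$ takes real values is locally a real one-dimensional arc through $\tilde y$; along this arc $g$ takes non-real values, so $f$ takes a real value at a non-real point of $Y$, contradicting that $f$ is separating. (Equivalently: a curve admitting a separating morphism has no singular point with a pair of complex-conjugate branches.) This local analysis is the only genuinely non-formal step; granting it, $f\circ g$ is separating.

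Now I would invoke the last paragraph of Section~2: since $\tilde Y$ is smooth and geometrically irreducible, the separating morphism $f\circ g$ is unramified on all of $\tilde Y(\mathbb{R})$ and the orientation it induces on $\tilde Y(\mathbb{R})$ coincides, up to global reversing, with the complex orientation of $\tilde Y$. Applying this with $f=f_1$ and $f=f_2$, both $f_1\circ g$ and $f_2\circ g$ induce the complex orientation on $\tilde Y(\mathbb{R})$; since the complex orientation is intrinsic to $\tilde Y$ and well defined up to a single global reversing, $f_1\circ g$ and $f_2\circ g$ induce the same orientation on $\tilde Y(\mathbb{R})$, up to global reversing.

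Finally I would transport this back along $g$. Near a point of $Y_{\mathrm{sm}}(\mathbb{R})$ the map $g$ is an isomorphism of real varieties and $f_i=(f_i\circ g)\circ g^{-1}$, so $f_i$ is unramified there and the orientation it induces on $Y_{\mathrm{sm}}(\mathbb{R})$ is the push-forward under $g$ of the orientation induced by $f_i\circ g$ on the open subset $g^{-1}(Y_{\mathrm{sm}}(\mathbb{R}))\subseteq\tilde Y(\mathbb{R})$. Restricting the equality of the previous paragraph to this open subset and pushing it forward along the isomorphism $g^{-1}(Y_{\mathrm{sm}})\cong Y_{\mathrm{sm}}$ yields that $f_1$ and $f_2$ induce the same orientation on the smooth points of $Y(\mathbb{R})$, up to global reversing, which is the assertion of the Lemma. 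As indicated, I expect the only real obstacle to be the verification that $f\circ g$ is again separating; the remaining steps are bookkeeping with the material of Section~2.
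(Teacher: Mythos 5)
Your proof is correct and follows essentially the same route as the paper: pass to the desingularization $g:\tilde Y\to Y$, observe that $f_i\circ g$ is again separating and therefore induces the complex orientation on $\tilde Y(\mathbb{R})$ up to global reversing, and transport this intrinsic orientation back along the isomorphism of $g$ over the smooth locus. The only difference is that you spell out the local-branch argument for why $f\circ g$ is separating, a step the paper asserts without proof; your verification of it is sound.
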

 Let $C\subseteq\mathbb{P}^2$ be an irreducible projective plane curve defined over the reals. For every point $e\in\mathbb{P}^2(\mathbb{R})$ we consider the linear projection $\pi_e: C\setminus\{e\}\to L$ where $L\subseteq\mathbb{P}^1$ is a line not containing $e$. Let $p\in C(\mathbb{R})$ and $e_1, e_2\in\mathbb{P}^2(\mathbb{R})$ be two points not lying on the tangent space $T$ of $C$ at $p$. Let $L\subseteq\mathbb{P}^2$ be a line not containing $e_1$ and $e_2$ and fix an orientation on $L(\mathbb{R})$. We observe that the orientation on $C(\mathbb{R})$ at $p$ induced by $\pi_{e_1}$ is the same as the one induced by $\pi_{e_2}$ if and only if $e_1$ and $e_2$ are in the same connected component of $\mathbb{P}^2(\mathbb{R})\setminus(L\cup T)$.
 Now assume that $C$ is hyperbolic, i.e. the set $$\Lambda_C=\{e\in\mathbb{P}^2(\mathbb{R}):\,C\textnormal{ is hyperbolic with respect to }e\}$$ is not empty. Then for each $e\in\Lambda_C$, the projection $\pi_e:C\to\mathbb{P}^1$ is separating. Since a separating morphism is not ramified at smooth real points, this means that $\Lambda_C$ does not intersect any line that is tangent to $C$ at a smooth real point.
 \begin{prop*}
  In the above situation the set $\Lambda_C$ is connected.
 \end{prop*}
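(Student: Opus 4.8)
The plan is to argue by contradiction: suppose $\Lambda_C$ has two distinct connected components, and pick points $e_1$ and $e_2$ in different components. Since $\Lambda_C = -\Lambda_C$ in the projective picture identifies $e$ with $-e$, I should be a little careful, but the essential point is that we will derive a contradiction from the two separating projections $\pi_{e_1}$ and $\pi_{e_2}$. By the Lemma, both $\pi_{e_1}$ and $\pi_{e_2}$ induce the same orientation on the smooth points of $C(\mathbb{R})$, up to global reversing. On the other hand, the observation just before the Proposition says that $\pi_{e_1}$ and $\pi_{e_2}$ induce the same orientation at a smooth real point $p$ (with tangent line $T_p$) exactly when $e_1$ and $e_2$ lie in the same connected component of $\mathbb{P}^2(\mathbb{R}) \setminus (L \cup T_p)$, where $L$ is an auxiliary line avoiding both centers. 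The strategy is to show that if $e_1, e_2$ are in different components of $\Lambda_C$, then one can find a smooth real point $p$ of $C$ whose tangent line $T_p$ separates $e_1$ from $e_2$ in a way incompatible with the global-reversing freedom.

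The key steps, in order: First, recall that $\Lambda_C$ avoids every line tangent to $C$ at a smooth real point (stated in the excerpt), so each such tangent line $T_p$ is disjoint from $\Lambda_C$; hence $\Lambda_C$ lies in the complement of the union of all these tangent lines, and in particular $e_1$ and $e_2$ lie on the same side of $T_p$ — or on opposite sides — consistently as $p$ varies within a connected family of smooth points. Second, I would invoke Bertini's theorem: a general line $L$ through a fixed point, or a general pencil, meets $C$ transversally at smooth points, so that the dual curve and the configuration of tangent lines is controlled; this is where Bertini enters, presumably to guarantee that we may choose the auxiliary line $L$ and the point $p$ in sufficiently general position that the orientation comparison is valid and that $C(\mathbb{R})$ has enough smooth real points. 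Third, I would track the global-reversing ambiguity: the Lemma gives agreement of orientations only up to a single global sign, so comparing at one point $p$ is not enough — I need to compare the "sidedness" of $e_1, e_2$ with respect to $T_p$ simultaneously for points $p$ lying in different pseudolines / connected components of $C(\mathbb{R})$, and show the only way to reconcile all these comparisons with a single global sign is for $e_1$ and $e_2$ to lie in the same component of $\Lambda_C$.

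The main obstacle I anticipate is the bookkeeping of the global orientation sign against the combinatorics of which side of each tangent line $T_p$ the centers $e_1,e_2$ fall on. Concretely: the Lemma forces a global consistency, but "sidedness of $e_i$ relative to $T_p$" depends on $p$, and one must show that moving $e$ from the component of $e_1$ to that of $e_2$ forces crossing some tangent line $T_p$ at a smooth real point $p$ — contradicting that $\Lambda_C$ avoids all such tangent lines. Making precise the claim that two distinct components of $\Lambda_C$ must be separated by such a tangent line (rather than, say, by a line tangent only at a singular point or at a complex point) is the delicate part; I expect this is handled by a general-position argument via Bertini, choosing a pencil of lines through a point of one component and following it to the other, and noting that along a generic such path the line becomes tangent to $C$ only at smooth real points. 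Once that separating tangent line is produced, the orientation argument via the Lemma and the pre-Proposition observation closes the contradiction.
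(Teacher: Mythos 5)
Your overall strategy --- deriving a contradiction by playing the Lemma's ``same orientation up to one global sign'' against the pointwise comparison criterion stated before the Proposition --- is indeed the paper's strategy, and you correctly identify the crux: producing one tangent line that separates $e_1$ from $e_2$ only shows that the two induced orientations disagree at one point, which is perfectly consistent with a single global reversal. But your plan does not close this gap. You propose to reconcile the signs by comparing at ``points $p$ lying in different pseudolines / connected components of $C(\mathbb{R})$,'' which cannot work in general: $C(\mathbb{R})$ may well be connected, and even when it is not, you give no mechanism that forces the two orientations to \emph{agree} at some second point. Without a point of forced agreement there is nothing to contradict, so the argument as outlined does not terminate.

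The missing idea is the choice of the target line of the two projections. Take $p_1$ a smooth point of $C(\mathbb{R})$ in the closure of one component $\Lambda_1$ and project from both $e_1$ and $e_2$ onto $L=T_1$, the tangent at $p_1$ (legitimate because $\Lambda_C$ avoids tangents at smooth real points, so neither $e_i$ lies on $T_1$). At $p=p_1$ the relevant set is $\mathbb{P}^2(\mathbb{R})\setminus(L\cup T_{p_1})=\mathbb{P}^2(\mathbb{R})\setminus T_1\cong\mathbb{R}^2$, which is connected, so the two orientations automatically agree at $p_1$. Moreover this same affine chart exhibits $\Lambda_1$ and $\Lambda_2$ as disjoint convex open subsets of $\mathbb{R}^2$; a generic line through $e_1$ meeting $\Lambda_2$ (transversal to $C$ and meeting it only in smooth points --- an elementary general-position fact, no Bertini needed here) hits the boundary of $\Lambda_2$ in a smooth point $p_2$ whose tangent $T_2$ separates $e_1$ from $\Lambda_2$, so $e_1$ and $e_2$ lie in different components of $\mathbb{P}^2(\mathbb{R})\setminus(T_1\cup T_2)$ and the orientations disagree at $p_2$. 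Agreement at $p_1$ together with disagreement at $p_2$ contradicts the Lemma. Bertini enters the paper only later, to cut down to a plane section in the case $n>3$; invoking it inside the Proposition, as you do, is a sign that the two-point comparison mechanism was not yet in place.
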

 \begin{proof}
  Assume for the sake of a contradiction that $\Lambda$ is not connected. Let $\Lambda_1$ and $\Lambda_2$ be two connected components of $\Lambda$ and let $e_i\in\Lambda_i$ for $i=1,2$. Let $p_1$ be a smooth point of $C(\mathbb{R})$ which is in the closure of $\Lambda_1$ and let $T_1$ be the tangent of $C$ at $p_1$. Then by the above discussion we have $\Lambda\subseteq\mathbb{P}^2(\mathbb{R})\setminus T_1\cong\mathbb{R}^2$. Thus, $\Lambda_1$ and $\Lambda_2$ are two disjoint convex open subsets of $\mathbb{R}^2$. We can find a line $G$ through $e_1$ that intersects $\Lambda_2$ and which intersects $C$ transversally and only in smooth points. The linear polynomial that defines a tangent $T_2$ to $C$ at one of the two intersection points of $G$ with the boundary of $\Lambda_2$, lets denote it it by $p_2$, will separate $e_1$ from $\Lambda_2$. Thus $e_1$ and $e_2$ are in distinct connected components of $\mathbb{P}^2(\mathbb{R})\setminus(T_1\cup T_2)$. Now we compare the orientations at $p_1$ and $p_2$ induced by the two morphisms $\pi_{e_i}: C\to T_1$. By the above discussion we see that these induce different orientations at $p_2$ but the same at $p_1$. But this is a contradiction to the conclusion of the lemma.
 \end{proof}

 \section{The general case}
 Now the general case follows from a simple application of the theorem of Bertini.
 \begin{thm*}
  Let $h \in \R[x_1,\ldots,x_n]$ be irreducible and hyperbolic. If $n>2$, the set $\Lambda_h$ of all points with respect to which $h$ is hyperbolic has two connected components.
 \end{thm*}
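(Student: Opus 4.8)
The plan is to reduce to the Proposition of the previous section by means of Bertini's theorem; this is already immediate when $n=3$. In that case $V(h)\subseteq\mathbb{P}^2$ is an irreducible hyperbolic plane curve, so $\Lambda_C$ is connected by the Proposition. By G{\aa}rding's theorem $\Lambda_h$ is a disjoint union of open convex cones, and the images in $\mathbb{P}^2(\R)$ of two such cones are either equal --- precisely when one is the negative of the other --- or disjoint; hence connectedness of $\Lambda_C$ forces all these cones to equal $\pm C$ for a single open convex cone $C$. Since a common point of $C$ and $-C$ would force $0\in C$ by convexity, contradicting $h(0)=0$, we get $\Lambda_h=C\sqcup(-C)$ with exactly two connected components. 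For $n>3$ the same bookkeeping will finish the proof once we know that $h$ has only one pair of hyperbolicity cones, and this is what I would aim at.

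The one elementary ingredient is a restriction lemma: if $h$ is hyperbolic with respect to $e$ and $U\subseteq\R^n$ is a linear subspace with $e\in U$, then $h|_U$ is hyperbolic with respect to $e$ and $C_{h|_U}(e)=C_h(e)\cap U$. The first claim holds because restricting a real-rooted family of univariate polynomials to a subfamily stays real-rooted; for the second, $C_{h|_U}(e)$ is a connected subset of $\{x:h(x)\neq0\}$ containing $e$, hence lies in the connected component $C_h(e)$, while $C_h(e)\cap U$ is convex by G{\aa}rding, hence connected, contains $e$ and lies in $\{x\in U:h(x)\neq0\}$, hence in $C_{h|_U}(e)$. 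Consequently, whenever a subspace $U$ meets a hyperbolicity cone $C$ of $h$, the intersection $C\cap U$ is a hyperbolicity cone of $h|_U$.

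Now take two hyperbolicity cones $C_1,C_2$ of $h$ and pick $e_i\in C_i$; I may assume $[e_1]\neq[e_2]$ in $\mathbb{P}^{n-1}$, since otherwise $C_1=\pm C_2$ already. As $C_1,C_2$ are open and full-dimensional and the map sending a pair of independent vectors to the line they span is open, I can choose the $e_i$ so that $\ell:=\overline{[e_1][e_2]}$ lies in any prescribed dense open subset of the Grassmannian of lines of $\mathbb{P}^{n-1}$. By Bertini's theorem a generic two-plane $W\cong\mathbb{P}^2$ meets $V(h)$ in an irreducible reduced curve (here $\dim V(h)=n-2\geq2$ is used), and since the variety of flags ``a line inside a two-plane'' is irreducible, the same holds for a generic two-plane containing a generic line; so I may take $W\supseteq\ell$ with $C:=V(h)\cap W$ irreducible, reduced and --- being a plane section through $e_1\in\Lambda_h$ --- hyperbolic. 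Let $U$ be the linear span of $W$. By the Proposition, $h|_U$ has exactly one pair of hyperbolicity cones, say $\pm K$. Since $e_i\in C_i\cap U$, the restriction lemma gives $C_i\cap U=C_{h|_U}(e_i)\in\{K,-K\}$ for $i=1,2$; in each of the resulting cases one of $C_1\cap C_2$ and $C_1\cap(-C_2)$ is nonempty, and as $-C_2$ is again a connected component of $\{x:h(x)\neq0\}$ and distinct components are disjoint, this forces $C_1=C_2$ or $C_1=-C_2$. Hence every hyperbolicity cone of $h$ equals $\pm C_1$, and the bookkeeping above shows that $\Lambda_h$ has exactly two connected components.

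The only point I expect to require genuine care is this use of Bertini: the two-plane is constrained to contain the prescribed line through $e_1$ and $e_2$, and not every line is admissible --- for instance lines through the vertex of $V(h)$ when $V(h)$ happens to be a cone. This is exactly why I would first move $e_1,e_2$ inside their cones until the joining line is generic; granting that, the rest of the argument is formal.
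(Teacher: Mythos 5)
Your argument is correct and is essentially the paper's own proof: both reduce to the plane-curve Proposition by cutting $V(h)$ with a Bertini-generic two-plane that meets two prescribed components of the hyperbolicity region. You simply make explicit what the paper leaves implicit --- the construction of that plane through a generic line joining points of the two cones, and the bookkeeping (via your restriction lemma) between affine hyperbolicity cones and connected components of the projective set $\Lambda_V$.
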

 \begin{proof}
  Let $V\subseteq\mathbb{P}^{n-1}$ be the hypersurface defined by $h$. Assume for the sake of a contradiction that $$\Lambda_V=\{e\in\mathbb{P}^{n-1}(\mathbb{R}):\,V\textnormal{ is hyperbolic with respect to }e\}$$ is not connected. Then we can find a real linear subspace $E\subseteq\mathbb{P}^{n-1}$ of dimension two that intersects two different connected components of $\Lambda_V$ such that $E\cap V$ is irreducible \cite[Cor. 6.11]{bertini}. But this contradicts the proposition which implies that $\Lambda_V\cap E$ is connected.
 \end{proof}

\bigskip

 \noindent \textbf{Acknowledgements.}
I would like to thank Daniel Plaumann, who encouraged me to write this note, and Eli Shamovich for some comments.

 \end{document}